\newtheorem{theorem}{Теорема}
\newtheorem{lemma}{Лемма}
\newtheorem{proposition}{Предложение}
\newtheorem{notation}{Замечание}
\begin{document}
\title{О рациональности многомерных возвратных степенных
рядов\footnote{Работа первого автора поддержана ..., работа
второго автора поддержана Российским фондом фундаментальных
исследований (РФФИ) и ...}}
\author{E.K. Лейнартас, А.П. Ляпин} \date{} \maketitle

А. Муавр рассмотрел под названием возвратных рядов степенные ряды
$F(z) = a_0 + a_1 z + ... + a_k z^k + ...$ с коэффициентами $a_1,
a_2, ..., a_k, ...$, образующими возвратные последовательности,
т.е. удовлетворяющими соотношению вида
$$
c_0 a_{m+p} + c_1 a_{m+p+1} + ... + c_m a_p = 0,\; p=0,1,2...,
$$
где $c_j$ -- некоторые постоянные. Оказалось, что такие ряды
всегда изображают  рациональные функции. Точнее, справедливо
следующее утверждение.\par\bigskip

\noindent \textbf{Теорема (Муавр, 1722).} {\itСтепенной ряд $F(z)$
является возвратным тогда и только тогда, когда он представляет
правильную рациональную функцию.}\par\bigskip

Доказательство этого факта можно найти, например, в
\cite{stanley1990}, в главе, посвященной простейшему общему классу
производящих функций -- рациональным функциям.

В многомерном случае ситуация значительно сложнее, например,
производящий ряд решения разностного уравнения с постоянными
коэффициентами в общем случае является расходящимся. Приведем
необходимые обозначения и определения и соответствующий пример.

Обозначим $x=(x_1, ..., x_n)$ точки $n$-мерной целочисленной
решетки $\mathbb Z^n = \mathbb Z \times ... \times \mathbb Z$, где
$\mathbb Z$ -- множество целых чисел, и $A=\{\alpha\}$ -- конечное
подмножество точек из $\mathbb Z^n$. Разностным уравнением
относительно неизвестной функции $f(x)$ целочисленных аргументов
$x=(x_1, ..., x_n)$ с постоянными коэффициентами $c_\alpha$
называется соотношение вида
\begin{align}\label{razn_ur}
\sum_{\alpha \in A} c_\alpha f(x+\alpha) =0.
\end{align}

В данной работе рассматривается случай, когда множество $A$ лежит
в положительном октанте $\mathbb Z_+^n$ целочисленной решетки и
удовлетворяет условию: существует точка $m= (m_1, ..., m_n)\in A$
такая, что для всех $\alpha \in A$ справедливы неравенства
\begin{align}\label{neravenstva}
\alpha_j \leqslant m_j, j= 1,2,...,n.
\end{align}

Обозначим через $P(z) = \sum_\alpha c_\alpha z^\alpha$
характеристический многочлен уравнения \eqref{razn_ur}. Пусть
$\delta_j$ -- оператор сдвига по переменной $x_j$: $\delta_j f(x)
= f(x_1, ..., x_{j-1}, x_j +1, x_{j+1}, ..., x_n)$, $\delta^\alpha
= \delta_1^{\alpha_1} \cdot \ldots \cdot \delta_n^{\alpha_n}$ и
под неравенством $\alpha\leqslant m$ будем понимать систему
неравенств $\alpha_j \leqslant m_j, j = 1, ..., n$. Тогда
разностное уравнение можно записать в виде:
\begin{align}\label{two}
P(\delta) f(x) \equiv \sum_{0\leqslant \alpha \leqslant m}
c_\alpha f(x+\alpha) = 0, x \in \mathbb Z_+^n.
\end{align}

Производящая функция ($z$-преобразование) функции $f(x)$
целочисленных аргументов $x\in \mathbb Z_+^n$ определяется
следующим образом:
\begin{align*}
F(z) = \sum_{x\geqslant 0} \frac {f(x)}{z^{x+I}}, \quad \text{ где
} I=(1,1,...,1).
\end{align*}

Если $f(x)$ -- решение разностного уравнения, то его производящая
функция $F(z)$ в случае $n>1$ представляет собой, вообще говоря,
расходящийся степенной ряд (см. \cite{lein2004}).\bigskip

\noindent \textbf{Пример}. Решением разностного уравнения
$$f(x_1+1, x_2+1) - f(x_1+1,x_2) - f(x_1, x_2+1) + f(x_1, x_2)=0$$
будет любая функция вида $f(x_1,x_2) = \varphi(x_1)+\psi(x_2)$,
где $\varphi$ и $\psi$ -- произвольные функции целочисленного
аргумента, а соответсвующий производящий ряд будет, вообще говоря,
расходящимся.\bigskip

Множество, на котором будем задавать <<начальные данные>>
разностного уравнения \eqref{two}, удовлетворяющего условию
\eqref{neravenstva}, определим следующим образом:
$$
X_0 = \{\tau \in \mathbb Z^n : \tau \geqslant 0, \tau \ngeqslant
m\},
$$
где символ $\ngeqslant$ означает, что точка $\tau$ лежит в
дополнении к множеству, определяемому системой неравенств $\tau_j
\geqslant m_j, j=1,..., n$.

Сформулируем задачу Коши: найти решение $f(x)$ уравнения
\eqref{two}, которое на множестве $X_0$ совпадает с заданной
функцией $\varphi(x):$
\begin{align}\label{nach_dan}
f(x) = \varphi(x), \; x\in X_0.
\end{align}

Нетрудно покзаать (см., например, \cite{lein2007}), что если
выполнено условие \eqref{neravenstva}, то задача
\eqref{two}-\eqref{nach_dan} имеет единственное решение. Вопрос о
разрешимости задачи \eqref{two}-\eqref{nach_dan} без ограничений
вида \eqref{neravenstva} рассмотрен в \cite{petrovsek}.

Приведем несколько формул, в которых производящая функция решения
задачи \eqref{two}-\eqref{nach_dan} выражается через начальные
данные.

Отметим, что с этого момента в данной работе будем рассматривать
такие задачи Коши для разностного уравнения, у которых
произвоядщий ряд \textbf{сходится} в некоторой окрестности
бесконечно удаленной точки.

\begin{theorem}\label{th_with_formulami}
Производящая функция $F(z)$ решения $f(x)$ задачи
\eqref{two}-\eqref{nach_dan} удовлетворяет следующим соотношениям:
\begin{gather}
P(z) F(z) = \sum_{0 \leqslant \alpha \leqslant m} c_\alpha \left(
\sum_{\substack{\tau \geqslant 0 \\ \tau \ngeqslant \alpha}} \frac
{\varphi(\tau)}{z^{\tau - \alpha + I}}\right);\label{formula1}\\
P(z) F(z) = \sum_{\substack{\tau \leqslant m \\ \tau \nleqslant
0}} \left( \sum_{\tau \leqslant \alpha \leqslant m} c_\alpha
\varphi(\alpha-\tau) \right) z^\tau;\label{formula2}\\ P(z)F(z) =
P(z) \sum_{\substack{\tau \geqslant 0 \\ \tau \ngeqslant m}} \frac
{\varphi(\tau)}{z^{\tau + I}} - \sum_{\substack{\tau \geqslant 0 \\
\tau \ngeqslant m}} \left( \sum_{0 \leqslant \alpha \leqslant
\tau} c_\alpha z^\alpha \right) \frac
{\varphi(\tau)}{z^{\tau+I}};\label{formula3}\\ P(z)F(z) =
\sum_{\substack{\tau \geqslant 0\\ \tau \ngeqslant m}} \left(
\sum_{\substack{\alpha \leqslant m \\ \alpha \nleqslant \tau}}
c_\alpha z^\alpha \right)
\frac{\varphi(\tau)}{z^{\tau+I}}.\label{formula4}
\end{gather}
\end{theorem}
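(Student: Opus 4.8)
The plan is to regard $F(z)$ and $P(z)F(z)$ as formal Laurent series in the variables $z_1^{-1},\dots,z_n^{-1}$. Because $P(z)$ consists of the finitely many monomials $z^\alpha$ with $0\leqslant\alpha\leqslant m$, each fixed power of $z$ receives contributions from only finitely many products, so every interchange and regrouping of sums below is legitimate at the level of formal series (this also sidesteps the fact that for $n>1$ the series $F$ need not represent a rational function). The whole theorem reduces to one computation, formula \eqref{formula1}; the identities \eqref{formula2}--\eqref{formula4} are then just three different ways of bracketing the very same family of monomials.

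To obtain \eqref{formula1} I would first expand
\[
P(z)F(z)=\sum_{0\leqslant\alpha\leqslant m}c_\alpha z^\alpha\sum_{x\geqslant 0}\frac{f(x)}{z^{x+I}}=\sum_{0\leqslant\alpha\leqslant m}c_\alpha\sum_{x\geqslant 0}\frac{f(x)}{z^{x-\alpha+I}},
\]
and for each $\alpha$ split the inner sum according to whether $x\geqslant\alpha$ or $x\geqslant 0,\ x\ngeqslant\alpha$. Two observations finish the argument. On the boundary piece the conditions $x\geqslant 0$ and $x\ngeqslant\alpha$ force $x_j<\alpha_j\leqslant m_j$ for some $j$, hence $x\ngeqslant m$, i.e. $x\in X_0$ and $f(x)=\varphi(x)$ by \eqref{nach_dan}; this piece is exactly the right-hand side of \eqref{formula1}. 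On the interior piece I substitute $y=x-\alpha\geqslant 0$ and interchange the summations, obtaining $\sum_{y\geqslant 0}z^{-(y+I)}\sum_{0\leqslant\alpha\leqslant m}c_\alpha f(y+\alpha)$; the inner sum is $P(\delta)f(y)$, which vanishes for every $y\geqslant 0$ by \eqref{two}. Thus the interior contributes nothing and \eqref{formula1} follows.

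I would then read off \eqref{formula4} and \eqref{formula3} from \eqref{formula1} with no further input. Interchanging the order of summation in \eqref{formula1} and rewriting $\tau\ngeqslant\alpha$ as $\alpha\nleqslant\tau$ gives
\[
P(z)F(z)=\sum_{\tau\geqslant 0}\left(\sum_{\substack{0\leqslant\alpha\leqslant m\\ \alpha\nleqslant\tau}}c_\alpha z^\alpha\right)\frac{\varphi(\tau)}{z^{\tau+I}};
\]
for $\tau\geqslant m$ the inner sum is empty (every $\alpha\leqslant m\leqslant\tau$ already satisfies $\alpha\leqslant\tau$), so the outer range may be cut down to $\tau\geqslant 0,\ \tau\ngeqslant m$, which is \eqref{formula4}. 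Formula \eqref{formula3} is then pure algebra: since $c_\alpha=0$ off the block $[0,m]$ one has $\sum_{\alpha\leqslant m,\ \alpha\nleqslant\tau}c_\alpha z^\alpha=P(z)-\sum_{0\leqslant\alpha\leqslant\tau}c_\alpha z^\alpha$, and distributing these two pieces over the $\tau$-sum yields precisely the two terms of \eqref{formula3}.

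Finally, \eqref{formula2} is \eqref{formula1} collected by powers of $z$: fixing the exponent $\tau$ and solving the linear relation between $\tau$, $\alpha$ and the argument of $\varphi$ turns the coefficient of $z^\tau$ into the finite inner sum shown in \eqref{formula2}, the admissible exponents being those with $\tau\leqslant m,\ \tau\nleqslant 0$. The step that demands the most care here is keeping the shift by $I=(1,\dots,1)$, inherited from the normalising factor $z^{-(x+I)}$ in the definition of $F$, correctly attached both to the argument of $\varphi$ and to the range of $\tau$. I expect the genuine difficulty of the whole proof to be concentrated in the paragraph establishing \eqref{formula1}, namely the twin facts that the ``boundary'' multi-indices automatically land in the region $X_0$ where $f$ is prescribed and that the ``interior'' terms cancel blockwise through the recurrence \eqref{two}; everything after that is reindexing.
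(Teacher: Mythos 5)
Your proof is correct and is essentially the paper's own: the paper performs the identical computation, multiplying $F(z)$ by $P(z)$, splitting each inner sum into the parts $x\geqslant\alpha$ and $x\ngeqslant\alpha$, annihilating the first part via $\sum_{x\geqslant 0}P(\delta)f(x)/z^{x+I}=0$ by \eqref{two}, and obtaining \eqref{formula2}--\eqref{formula4} from \eqref{formula1} by regrouping, the only differences being cosmetic (the paper derives \eqref{formula4} from \eqref{formula3} rather than directly from \eqref{formula1}, and you usefully make explicit two points the paper leaves tacit: the formal-Laurent-series justification of the interchanges, and the fact that $x\geqslant 0$, $x\ngeqslant\alpha$ with $\alpha\leqslant m$ forces $x\in X_0$, so that $f(x)=\varphi(x)$ by \eqref{nach_dan}). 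One caution about the step you yourself flag as delicate: carrying the shift by $I$ through the collection by powers of $z$ actually attaches the monomial $z^{\tau-I}$, not $z^{\tau}$, to the inner sum in \eqref{formula2} (for $n=1$, $m=1$ one computes $P(z)F(z)=c_1\varphi(0)$, while the printed right-hand side of \eqref{formula2} gives $c_1\varphi(0)z$), so the statement as printed contains a typo that your sketch, if carried out literally, would have to correct.
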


\begin{notation}
В одномерном случае в правых частях всех формул
\eqref{formula1}-\eqref{formula4} стоят конечные суммы, в
частности, из формулы \eqref{formula2} следует, что произведение
$P(z) F(z)$ является многочленом. Таким образом, производящая
функция $F(z)$ является рациональной, что, собственно, и
доказывает теорему Муавра.
\end{notation}

Для формулировки многомерного варианта теоремы Муавра о
рациональности производящей функции решения разностного уравнения
нам потребуются следующие обозначения.

Пусть $J = (j_1, ..., j_n)$, где $j_k \in \{0,1\}, k=1,..., n,$ --
упорядоченный набор из нулей и единиц. Каждому такому набору
сопоставим <<грань целочисленного прямоугольника>> $\Pi_m =
\{x\in\mathbb Z^n: 0\leqslant x_k \leqslant m_k, k=1,...,n\}$
следующим образом:
$$\Gamma_J = \{x\in \Pi_m : x_k = m_k, \text{ если } j_k =1,
\text{ и } x_k < m_k, \text{ если } j_k =0\}.$$ Например,
$\Gamma_{(1,1, ..., 1)} = \{m\}$, a $\Gamma_{(0,0,..., 0)} =
\{x\in \mathbb Z^n: 0 \leqslant x_k < m_k, k=1, ..., n\}$.

Нетрудно проверить, что  $\Pi_m = \bigcup\limits_J \Gamma_J$ и для
различных $J, J'$ соответствующие грани не пересекаются: $\Gamma_J
\cap \Gamma_{J'} = \emptyset$.

Пусть $\Phi(z) = \sum\limits_{\substack{\tau \geqslant 0 \\
\tau \ngeqslant m}} \dfrac{\varphi(\tau)}{z^{\tau+I}}$ --
производящая функция начальных данных задачи
\eqref{razn_ur2}-\eqref{nach_dan} и каждой точке $\tau\in
\Gamma_J$ сопоставим ряд $\Phi_{\tau, J} (z) =
\sum\limits_{y\geqslant \; 0} \frac
{\varphi(\tau+Jy)}{z^{\tau+Jy+I}}$, а грани $\Gamma_J$ -- ряд
$\Phi_J (z) = \sum\limits_{\tau \in \; \Gamma_J} \Phi_{\tau,
J}(z)$. Если функцию $\varphi(x)$ начальных данных продолжить на
$\mathbb Z^n_+ \setminus X_0$ нулем, тогда производящую функцию
начальных данных можно записать в виде $\Phi(z) = \sum\limits_J
\Phi_J (z) = \sum\limits_J \sum\limits_{\tau\in \Gamma_J}
\Phi_{\tau,J}(z)$.

\begin{theorem}\label{th_o_proizv_f}
Производящая функция $F(z)$ решения задачи
\eqref{two}-\eqref{nach_dan} и производящая функция начальных
данных $\Phi(z)$ связаны соотношением
\begin{align}
P(z) F(z) = \sum_J \sum_{\tau\in \Gamma_J} \Phi_{\tau, J}(z)
P_\tau (z),
\end{align}
где многочлены $P_\tau (z)$ имеют вид $P_\tau(z) =
\sum\limits_{\substack{\alpha\leqslant m \\ \alpha \nleqslant
\tau}} c_\alpha z^\alpha$.
\end{theorem}

Отметим, что для случая $n=2$ теорема \ref{th_o_proizv_f} доказана
в работе \cite{Lyapin2009} в связи с изучением рациональных
последовательностей Риордана, в которой также приведен следующий
пример.\bigskip

\noindent \textbf{Пример}. Рассмотрим последовательности из $x$
элементов $a_1 a_2 ... a_x$, причем $a_1=0$ и $a_j\in\{0,1\}$ для
$2\leqslant j \leqslant x$. Элемент последовательности $a_j$
назовем изолированным, если он отличен от всех элементов, стоящих
на соседних местах. Обозначим $r(x,y)$ число таких
последовательностей, содержащих $y$ изолированных элементов.
Очевидно, что $r(x,y) =0$, если $x<y$.

Данная последовательность является решением задачи Коши для
разностного уравнения
$$
r(x+2, y+1) - r(x+1, y+1) - r(x+1, y) - r(x, y+1) + r(x,y) =0
$$
с начальными данными $\varphi(0,0) = 1, \varphi(1,0) = 0,
\varphi(x,0) = \varphi(x-1,0)+\varphi(x-2,0), x\geqslant 2$,
$\varphi(1,1)=1$, $\varphi(0,y) = 0, y\geqslant 1$ и $\varphi(1,y)
= 0, y\geqslant 2$.

Используя теорему 2, построим для каждого набора $J$ разбиение
прямоугольника $\Pi_{(2,1)}$ на множества: $\Gamma_{(0,0)} =
\{(0,0),(1,0)\}$, $\Gamma_{(0,1)} = \{(0,1),(1,1)\}$,
$\Gamma_{(1,0)} = \{(2,0)\}$, $\Gamma_{(1,1)} = \{(2,1)\}$.

Для каждого элемента множества $\Gamma_J$ построим ряды
$\Phi_{\tau, J}$ (ниже в обозначении $\Phi_{\tau, J}$ индекс $J$
опущен для краткости записи ) коэффициентами из соответствующих
начальных данных и многочлены $P_{\tau}$:
\begin{align*}
\Phi_{0,0}(z,w) = \frac 1{zw},\quad &P_{0,0}(z,w) = z^2w-zw-z-w, \\
\Phi_{1,0}(z,w) = 0,\quad &P_{1,0}(z,w) = z^2w-zw-w,\\
\Phi_{0,1}(z,w) = 0,\quad &P_{0,1}(z,w) = z^2w-zw-z,\\
\Phi_{1,1}(z,w) = \frac 1{z^2w^2},\quad &P_{1,1}(z,w) = z^2w,\\
\Phi_{2,0}(z,w) = \frac 1{zw(z^2-z-1)},\quad &P_{2,0}(z,w) =
z^2w-zw-w.
\end{align*}
Теперь, используя формулу из теоремы 2, легко записать
производящую функцию последовательности $r(n,k)$:
$$
F (z,w) = \frac {z - 1}{z^2w - zw - w - z + 1}.
$$
\bigskip

Из теоремы \ref{th_o_proizv_f} легко получается многомерный аналог
теоремы Муавра.

\begin{theorem}\label{th_moivre}
Производящая функция $F(z)$ решения задачи
\eqref{two}-\eqref{nach_dan} рациональна тогда и только тогда,
когда рациональна производящая функция $\Phi(z)$ начальных данных.
\end{theorem}

Приведем еще одно следствие из теоремы \ref{th_o_proizv_f}.
Дискретной функцией Грина $f_{\tau_0}(x), \tau_0 \in X_0,$
называется решение задачи \eqref{two}-\eqref{nach_dan} с
начальными данными
\begin{align}
\varphi_{\tau_0} (x) =
\begin{cases}
1, \text{ если } x = \tau_0,\\
0, \text{ если } x \neq \tau_0.
\end{cases}
\end{align}

Отметим, что асимптотические свойства дискретной функции Грина
играют важную роль при исследовании устойчивости линейной
однородной двуслойной разностной схемы (см., например,
\cite{fedo}).

\begin{proposition}
Производящая функция $F_{\tau_0}(z)$ дискретной функции Грина
$f_{\tau_0}(x)$ задачи \eqref{two}-\eqref{nach_dan} рациональна и
имеет вид
$$
F_{\tau_0} (z) = \left( \sum_{\substack{\alpha\leqslant m \\
\alpha \nleqslant \tau_0}} c_\alpha z^{\alpha - \tau_0 - I}\right)
\slash P(z).
$$
\end{proposition}

Приведем доказательства сформулированных утверждений.

\begin{proof}[Доказательство теоремы \ref{th_with_formulami}]
Умножим ряд $F(z) = \sum\limits _{x\geqslant 0} \frac
{f(x)}{x^{x+I}}$ на характеристический многочлен $P(z) =
\sum\limits_\alpha c_\alpha z^\alpha$ и после перегруппировки с
учетом уравнения \eqref{two} получим
\begin{align*}
P(z) F(z) = \left( \sum_\alpha c_\alpha z^\alpha \right) \left(
\sum_{x\geqslant 0} \frac{f(x)}{z^{x+I}}\right)  =
\left(\sum_\alpha c_\alpha z^\alpha\right) \left( \sum_{x\geqslant
\alpha} \frac{f(x)}{z^{x+I}} + \sum_{x\ngeqslant \alpha}
\frac{f(x)}{z^{x+I}} \right) =\\= \sum_\alpha \left(c_\alpha
\sum_{x\geqslant \alpha} \frac{f(x)}{z^{x-\alpha+I}}\right) +
\sum_\alpha \left(c_\alpha z^\alpha \sum_{x \ngeqslant\: \alpha}
\frac{f(x)}{z^{x+I}}\right) =\\= \sum_\alpha \left(c_\alpha
\sum_{x \geqslant 0} \frac {f(x+\alpha)}{z^{x+I}}\right) +
\sum_\alpha
c_\alpha z^\alpha \sum_{x \ngeqslant \:\alpha} \frac {f(x)}{z^{x+I}} = \\
= \sum_{x\geqslant 0} \frac {P(\delta)f(x)}{z^{x+I}} + \sum_\alpha
c_\alpha z^\alpha \sum_{x \ngeqslant \alpha} \frac {f(x)}{z^{x+I}}
= \sum_\alpha c_\alpha z^\alpha \sum_{x \ngeqslant \alpha} \frac
{f(x)}{x^{x+I}}.
\end{align*}
Формула \eqref{formula1} доказана. Формулы \eqref{formula2} и
\eqref{formula3} получаются из формулы \eqref{formula1}
группировкой слагаемых в правой части соответственно относительно
$z^\tau$ и $\varphi(\tau)$. Формула \eqref{formula4} есть простое
следствие формулы \eqref{formula3}.
\end{proof}

\begin{proof}[Доказательство теоремы \ref{th_o_proizv_f}]
Воспользуемся формулой \eqref{formula4}:
\begin{align*}
P(z) F(z) = \sum_{\substack{\tau \geqslant\; 0 \\ \tau
\ngeqslant\; m}} P_\tau (z) \frac {\varphi(\tau)}{z^{\tau+I}} =
\sum_J \sum_{\tau \in\; \Gamma_J} \sum_{y\geqslant\; 0} P_{\tau +
J y} (z) \cdot \frac {\varphi(\tau+Jy)}{z^{\tau+Jy+I}}.
\end{align*}
Из определения многочлена $P_\tau(z)$ следует, что для любого
$\tau \in \Gamma_J$ и любого $y\geqslant 0$ справедливо равенство
$P_{\tau+Jy}(z) = P_\tau (z)$, а используя определение
$\Phi_{\tau, J}(z)$, получим
\begin{equation*}
P(z) F(z) = \sum_J \sum_{\tau \in \;\Gamma_J} P_\tau(z)
\Phi_{\tau,\; J} (z).
\end{equation*}
\end{proof}

Для доказательства теоремы \ref{th_moivre} нам потребуются
следующие вспомогательные утверждения.

\begin{lemma}\label{lemma1}
$1^0$. Пусть степенной ряд $\Phi(\xi) = \sum\limits_{x\geqslant 0}
\varphi(x) z^x$ сходится в некоторой окрестности $U =\{\xi:
|\xi_j| < \rho_j, j=1,...,n\}$ начала координат. Тогда для любого
$\tau\in \mathbb Z^n_+$ и любого набора $J = \{j_1,..., j_n\}, j_k
\in \{0,1\}$ для функции $\Phi_{\tau, J} (z) =
\sum\limits_{y\geqslant 0 }\varphi(\tau + J y) z^{\tau + Jy}$
справедлива интегральная формула
\begin{align*}
\Phi_{\tau, J} (z) = \frac{z^\tau}{(2\pi i)^n} \int\limits_\Gamma
\frac{\Phi(\xi)d\xi}{(\xi-z)^J \xi^{\tau - J +I}},
\end{align*}
где $\Gamma = \{\xi\in \mathbb C^n: |\xi_j| = R_j < \rho_j,
j=1,...,n\}, (\xi - z)^J = (\xi_1 - z_1)^{j_1} \cdot ... \cdot
(\xi_n - z_n)^{j_n}$. Формула справедлива для таких $z$, что
$|z_j| < R_j, j=1,...,n$.\par

$2^0$. Если функция $\Phi(z)$ рациональна, то функция $\Phi_{\tau,
J}(z)$ также рациональна.
\end{lemma}

\begin{proof}[Доказательство леммы]
Для доказательства интегрального представления функции разложим
$\Phi_{\tau, J}(z)$ подынтегральную функцию в степенной ряд
$$\frac{\Phi(\xi)}{(\xi-z)^J} \cdot \frac 1{\xi^{\tau-J+I}} =
\sum_{x\geqslant \,0} \varphi(x)\xi^x \cdot \sum_{y\geqslant \, 0}
\frac{z^{Jy}}{\xi^{Jy}}\cdot \frac 1{\xi^{\tau+I}},$$ сходящийся
абсолютно и равномерно на множестве $\Gamma$, и почленно
проинтегрируем его.\par

Для доказательства рациональности функции $\Phi_{\tau, J} (z)$
достаточно применить к полученному выражению для функции через
интеграл повторное интегрирование и воспользоваться теоремой о
вычетах. Так, при интегрировании по первой переменной нужно будет
найти вычеты в полюсах $\xi=0$ и $\xi=z_1$ (если $j_1=1$). При
этом важно, что рациональность подынтегральной функции сохраняется
при каждом интегрировании.
\end{proof}

\begin{notation}
Утверждение $2^0$ леммы также можно доказать, используя понятие
сечения кратного степенного ряда (см. \cite{Duffin}) и теорему о
рациональности сечения ряда, представляющего собой рациональную
функцию.
\end{notation}

\begin{notation}
Если вместо степенного ряда $\Phi(z) = \sum\limits_{x\geqslant\:
0} \varphi(x) z^x$ рассматривать ряд Лорана $\Phi(\xi) =
\sum\limits_{x\geqslant\:0} \frac {\varphi(x)}{z^{x+I}}$ и,
соответственно, функцию $\Phi_{\tau, J}(z) = \sum_{y\geqslant \:0}
\frac{\varphi(\tau+Jy)}{z^{\tau+Jy+I}}$, то утверждение $2^0$
леммы \ref{lemma1} о ее рациональности остается справедливым.
\end{notation}

\begin{proof}[Доказательство теоремы \ref{th_moivre}] Пусть
производящая функция $F(z) = \sum\limits_{x\geqslant 0}
\frac{f(x)}{z^{x+I}}$ решения задачи (3)-(4) для разностного
уравнения рациональна и $\Phi(z) =
\sum\limits_{\substack{x\geqslant\:0\\x \ngeqslant\: m}}
\frac{\varphi(x)}{z^{x+I}}$ -- производящая функция начальных
данных. В силу леммы (а точнее, замечания к ней) функция
$\Phi_{m,J}(z) = \sum\limits_{x\geqslant\: 0}
\frac{\varphi(x)}{z^{x+I}}$ рациональна (для $J=(1, ..., 1)$), но
тогда $\Phi(z) = F(z) - \Phi_{m,J}(z)$, т.е. $\Phi(z)$ --
рациональна.

Для доказательства достаточности воспользуемся теоремой
\ref{th_o_proizv_f}, согласно которой для производящей функции
$F(z)$ решения задачи (3)-(4) справедливо соотношение $$P(z) F(z)
= \sum\limits_J \sum\limits_{\tau\in \Gamma_J} \Phi_{\tau, J}
P_\tau (z),$$ где $P_\tau(z)$ -- многочлены. Если производящая
функция $\Phi(z)$ начальных данных рациональна, то в силу леммы
будут рациональными и функции $\Phi_{\tau, J}(z)$. Таким образом
производящая функция $F(z)$ решения задачи (3)-(4) рациональна.

\end{proof}

Доказательство предложения 1 сразу следует из формулы
\eqref{formula4}.

\end{document}